\numberwithin{equation}{section}
\newtheorem{theorem}{Theorem}[section]
\newtheorem{lemma}[theorem]{Lemma}
\newtheorem{proposition}[theorem]{Proposition}
\newtheorem{cor}[theorem]{Corollary}
\newtheorem{rem}[theorem]{Remark}
\newcommand{\ind}{\mathbf{1}}
\newcommand{\R}{\mathbb{R}}
\newcommand{\Z}{\mathbb{Z}}
\newcommand{\N}{\mathbb{N}}
\renewcommand{\tilde}{\widetilde}
\newcommand{\bP}{{\ensuremath{\mathbf P}} }
\newcommand{\bE}{{\ensuremath{\mathbf E}} }
\DeclareMathSymbol{\leqslant}{\mathalpha}{AMSa}{"36} 
\DeclareMathSymbol{\geqslant}{\mathalpha}{AMSa}{"3E} 
\DeclareMathSymbol{\eset}{\mathalpha}{AMSb}{"3F}     
\newcommand{\dd}{\,\text{\rm d}}             
\newcommand{\bbE}{{\ensuremath{\mathbb E}} }
\newcommand{\bbL}{{\ensuremath{\mathbb L}} }
\newcommand{\bbP}{{\ensuremath{\mathbb P}} }
\newcommand{\bbR}{{\ensuremath{\mathbb R}} }
\newcommand{\ga}{\alpha}
\newcommand{\gb}{\beta}
\newcommand{\gga}{\gamma}            
\newcommand{\gd}{\delta}
\newcommand{\gep}{\varepsilon}       
\newcommand{\go}{\omega}
\newcommand{\gl}{\lambda}
\def\captionfont@{\footnotesize}
\def\captionheadfont@{\scshape}
\long\def\@makecaption#1#2{%
  \vspace{2mm}
  \setbox\@tempboxa\vbox{\color@setgroup
    \advance\hsize-6pc\noindent
    \captionfont@\captionheadfont@#1\@xp\@ifnotempty\@xp
        {\@cdr#2\@nil}{.\captionfont@\upshape\enspace#2}%
    \unskip\kern-6pc\par
    \global\setbox\@ne\lastbox\color@endgroup}%
  \ifhbox\@ne 
    \setbox\@ne\hbox{\unhbox\@ne\unskip\unskip\unpenalty\unkern}%
  \fi
  \ifdim\wd\@tempboxa=\z@ 
    \setbox\@ne\hbox to\columnwidth{\hss\kern-6pc\box\@ne\hss}%
  \else 
    \setbox\@ne\vbox{\unvbox\@tempboxa\parskip\z@skip
        \noindent\unhbox\@ne\advance\hsize-6pc\par}%
\fi
  \ifnum\@tempcnta<64 
    \addvspace\abovecaptionskip
    \moveright 3pc\box\@ne
  \else 
    \moveright 3pc\box\@ne
    \nobreak
    \vskip\belowcaptionskip
  \fi
\relax
}
\def\writefig#1 #2 #3 {\rlap{\kern #1 truecm
\raise #2 truecm \hbox{#3}}}
\newcommand{\tf}{\textsc{f}}
\title{The Martingale approach to disorder irrelevance for pinning models}
\author{Hubert Lacoin}
\address{
Università degli Studi “Roma Tre”, Largo San Leonardo Murialdo
00146 Roma, Italia}
\email{lacoin\@@math.jussieu.fr}
\begin{document}
 \maketitle

\begin{abstract}
 This paper presents a very simple and self-contained proof of disorder irrelevance for inhomogeneous pinning models with return exponent $\alpha\in (0,1/2)$. We also give a new upper bound for the contact fraction of the disordered model at criticality.
\\
2000 \textit{Mathematics Subject Classification: 82B44, 60K37, 60K05
  }\\
  \textit{Keywords: Pinning/Wetting Models, Disordered Models, Harris Criterion, Relevant
    Disorder, Renewal Theory}
\end{abstract}

\section{Introduction and presentation of the main result}

Pinning/wetting models with quenched disorder describe the random interaction between a directed polymer and a one-dimensional defect line.
In absence of interaction, the polymer spatial configuration is modeled by $(n,S_n)_{n\ge 0}$,  where $(S_n)_{n\ge 0}$ is a Markov Chain (law $\bP$) in a certain state space $\Sigma$ (e.g.\ a simple symmetric random walk in $\Z^d$ for the $d+1$ dimensional polymer), and the initial condition is some fixed element of $\Sigma$ which by convention we call $0$.
The defect line, on the other hand, is just $\{0\}\times \Z_+$. The polymer/line interaction is the following : each time the polymer touches the line (i.e., each time $S_n=0$) it gets an energy reward/penalty which can be either positive or negative.

\medskip

The interaction with the defect line only changes the law of the return time to zero of $(S_n)_{n\ge0}$, but does not change the law of the excursions conditionally to the time of the set of visits to zero. For this reason, we focus on $\tau\subset\N$ the set of times where $S_n=0$ (this is the pinning configuration) and forget about the original Markov chain.
Alternatively, we may consider $\tau=(\tau_n)_{n\ge 0}$ as an increasing sequence starting from zero that may contain only finitely many terms  (if $n<\infty$ is the number of element of $\tau$, we write by convention  $\tau_{k}=\infty$ for $k>n$).

\medskip

Under $\bP$, $\tau$ is a renewal sequence, {\it i.e.}, $\tau_0=0$ and the variables $\tau_{n+1}-\tau_n$ (conditioned to $\tau_n<\infty$) are i.i.d.\ distributed. 

\medskip

The most interesting cases for pinning problems are the case where the law of the inter-arrival times to $0$ of the Markov chain have power-law decay, more precisely

\begin{equation}\label{regvar}
 \bP(\tau_1=n)=\frac{L(n)}{n^{1+\alpha}},
\end{equation}
with $\alpha>0$ and $L$ a slowly varying function, {\it i.e.}\ a measurable function from $(0,\infty)$ to $(0,\infty)$ such that $\lim_{x\to \infty} L(xu)/L(x)=1$ for all $u>0$ (see \cite{cf:RegVar} for more informations on slowly varying functions). We keep this assumption throughout all the paper.

\medskip
 
Now we are ready to define our model in a simple manner: given $(\go_n)_{n\in \N}$, a typical realization of a sequence of i.i.d.\ centered random variables with unit variance which have exponential moments (see \eqref{expmom}, let $\bbP$ denote the associated law), $h\in \R$ and $\gb\ge 0$, we consider the sequence of measure $\bP^{\gb,h,\go}_N$ on $\tau$ defined by
\begin{equation}
\frac{\dd \bP^{\gb,h,\go}_N}{\dd \bP}(\tau):=\frac{1}{Z_N^{\gb,h,\go}}\exp\left(\sum_{n=1}^N (h+\gb \go_n)\ind_{\{n\in \tau\}}\right),
\end{equation}
where 
\begin{equation}
 Z_N^{\gb,h,\go}:= \bE\left[\exp\left(\sum_{n=1}^N (h+\gb \go_n)\ind_{\{n\in \tau\}}\right)\right],
\end{equation}
is the renormalization factor (partition function) that makes $\bP^{\gb,h,\go}_N$ a probability law.
The term $h+\gb\go_n$ corresponds to the energy reward/penalty for a return to zero at step $n$.
We want to understand the typical behavior of $\tau$ under the measure $\bP^{\gb,h,\go}_N$ for large $N$. To that purpose,
a key quantity is the {\sl quenched} free energy of the system

\begin{equation}
\tf^q(\gb,h):=\lim_{N\to \infty}\frac{1}{N}\bbE \log Z_N^{\gb,h,\go}=\lim_{N\to \infty} \frac{1}{N} \log Z_N^{\gb,h,\go},
\end{equation}
where the existence of the limit and the second inequality hold $\bbP-a.s.$
The existence of these limits follows from some superadditivity properties of the system (see e.g.\ \cite[Chapter 4]{cf:Book}). The function $h\mapsto \tf^q(\gb,h)$ is non-negative, convex, and non-decreasing. A phase transition in $h$ occurs in the system at the value

\begin{equation}
 h_c(\gb):=\inf\left\{h\ : \tf^q(\gb, h)>0\right\},
\end{equation}
which we refer to as the quenched critical point.

It is known that $h<h_c(\gb)$ corresponds to the delocalized phase, where there is at most $O(\log (N))$ contact with the defect line before $N$ with large probability, whereas $h>h_c(\gb)$ corresponds to the localized phase where the number point in $\tau\cap [0,N]$ under $\bP_N^{\gb,\go,h}$ is of order $N$ (We refer to \cite[Chapters 7,8]{cf:Book} for further literature and discussion of this point).
We write $\tf(h)$ for $\tf^q(0,h)$ and $h_c$ for $h_c(0)$.

In analogy with the quenched free energy, the {\sl annealed} free energy is defined by
\begin{equation}
 \tf^a(\gb,h):=\lim_{N\to \infty}\frac{1}{N}\log  \bbE Z_N^{\gb,h,\go}=\tf(h+\gl(\gb)).
\end{equation}
(the second equality is obtained by using Fubini's theorem) where
\begin{equation}\label{expmom}
 \gl(\gb):=\log \bbE[\exp(\gb\go_1)]<\infty.
\end{equation}
We define also the annealed critical point for the free energy by
\begin{equation}
 h^a_c(\gb):=\inf\{ h : \  \tf^a(h,\gb)>0\}=h_c-\gl(\gb).
\end{equation}
By Jensen inequality, the annealed free-energy dominates the annealed one. Indeed
\begin{equation}
\bbE \log Z_N^{\gb,h,\go}\le  \log \bbE Z_N^{\gb,h,\go}.
\end{equation} 
So that
\begin{equation}\label{dominates}
\begin{split}
 \tf^q(\gb,h)&\le \tf^a(\gb,h)\\
h_c(\gb)&\ge h_c^a(\gb).
\end{split}
\end{equation}

\medskip

The behavior of the polymer measure for $\gb=0$ (the homogeneous pinning model) is very well understood (see \cite{cf:Fisher, cf:Book}). In this case, the model possesses the property of being exactly solvable: one has an explicit formula for the free energy (and therefore, also for the annealed free energy for all $\gb$).
In particular one has $h_c=-\log \bP[\tau_1<\infty]$ ($h_c=0$ when $\tau$ is recurrent) and
\begin{equation}\label{exponent}
 \lim_{h\to (h_c)_+} \frac{\log \tf(h)}{\log(h-h_c)}=1\vee \alpha^{-1}.
\end{equation}
The quantity $1\vee \alpha^{-1}$ is the critical exponent for the annealed free energy.

\medskip

The Harris criterion (formulated by A.B.\ Harris \cite{cf:Harris}), predicts that for disordered systems, whether the quenched and annealed systems have the same critical behavior at high temperature (i.e. for low $\gb$) depends on the critical exponent of the annealed free energy. More precisely: it says that 
disorder is relevant (for all $\gb$) if the critical exponent is smaller than $2$ and irrelevant (for small values of $\gb$) if the critical exponent is larger than $2$. In view of \eqref{exponent}, this corresponds, for our pinning model to $\alpha>1/2$ and $\alpha<1/2$ respectively. 

 While a priori, the Harris approach yields a prediction only on critical exponents,
when specialized to pinning models it yields the stronger prediction  (see in particular \cite{cf:DHV, cf:FLNO}) that
if  $\alpha <1/2$, then both the annealed critical exponent of the free energy coincides with the quenched critical exponent
and that the two critical points (annealed and quenched) coincide. 
On the other hand, if $\ga >1/2$ quenched and annealed free energy exponent and critical point are expected to differ.

Various mathematical confirmations have been given for the validity of Harris criterion for pinning models (see \cite{cf:GT_cmp, cf:Ken, cf:T_cmp, cf:DGLT, cf:AZ}), and recently, the marginal case $\alpha=1/2$ for which Harris criterion gives no prediction has been solved \cite{cf:GLT_marg}.

In this note, we present a simple martingale method that proves the validity of Harris criterion in the case $\alpha<1/2$. Stronger versions of this result have been proved by Alexander \cite{cf:Ken} for Gaussian environment, an alternative approach was found later by Toninelli \cite{cf:T_cmp}, but our new method considerably simplifies the proof and does not need any assumption on the environment (whereas both other papers focused on the Gaussian case).
We do not cover the special case $\alpha=0$ which was treated by Alexander and Zygouras \cite{cf:AZ_new}. Our method allows us also to derive new results about the property of the trajectories at the critical point $h_c(\gb)$. We present now our main result.

\begin{theorem}\label{mainrel}
If $\alpha\in(0,1/2)$ or if $\alpha=1/2$ and $L$ is such that
\begin{equation}\label{convers}
 \sum_{n=1}^\infty \frac{1}{n L(n)^2}<\infty,
\end{equation}
there exists $\gb_2>0$ such that for all $\gb\le \gb_2$, $h_c(\gb)=h^a_c(\gb)$, and 
\begin{equation}
 \lim_{h\to h^a_c(\gb)^+} \frac{\log\tf^q(\gb,h)}{\log (h-h^a_c(\gb))}=\alpha^{-1}.
\end{equation}
\end{theorem}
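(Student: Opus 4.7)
The plan is to prove both parts of the theorem through a second-moment analysis at a carefully tuned finite scale, propagated to the thermodynamic limit by concatenation of blocks.

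Set $\bar h := h+\gl(\gb)$, so that by Fubini $\bbE\,Z_N^{\gb,h,\go}=Z_N^{0,\bar h}$, and introduce the mean-one ratio $W_N:=Z_N^{\gb,h,\go}/Z_N^{0,\bar h}$. A direct replica expansion (write $W_N^2$ as an expectation over two independent copies $\tau^1,\tau^2$ of the renewal, then integrate out $\go$) gives
\begin{equation*}
\bbE[W_N^2]\;=\;\bE_N^{\otimes 2}\!\left[\exp\!\bigl(c_\gb\,|\tau^1\cap\tau^2\cap[1,N]|\bigr)\right],\qquad c_\gb:=\gl(2\gb)-2\gl(\gb)=O(\gb^2),
\end{equation*}
where $\bE_N^{\otimes 2}$ denotes the product of two copies of the homogeneous pinning law at parameter $\bar h$. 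I would run the estimate at $h = h_c^a(\gb)+u$ (hence $\bar h = h_c+u$) at the matched scale $N(u)\asymp u^{-1/\ga}$, on which the annealed free-energy contribution $\tf(h_c+u)\,N(u)$ is of order $1$. At this scale the expected replica intersection $\gvr(u)^2 N(u)$, with $\gvr(u):=\tf'(h_c+u)\asymp u^{1/\ga-1}$, is of order $u^{1/\ga-2}$ and vanishes as $u\downarrow 0$ precisely under the hypotheses of the theorem. Together with the fact that $\tau^1\cap\tau^2$ is itself a renewal with summable inter-arrivals (and hence geometric-type total cardinality), which follows from the strong renewal theorem $\bP(n\in\tau)\asymp n^{\ga-1}/L(n)$, this yields $\sup_u\bbE[W_{N(u)}^2]\le C(\gb)$ for $\gb$ below some threshold $\gb_2>0$.

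Paley--Zygmund at the scale $N(u)$ then gives $\bbP\!\bigl(Z_{N(u)}^{\gb,h,\go}\ge\tfrac12 Z_{N(u)}^{0,\bar h}\bigr)\ge\gep_0>0$, uniformly in $u$, so that on an event of positive probability the block partition function is bounded below by a positive constant. I would propagate this block-level positivity to the thermodynamic limit by working with the constrained (super-multiplicative) partition function $\bE[e^{\sum(h+\gb\go_n)\ind_{n\in\tau}}\ind_{N\in\tau}]$ and using ergodicity of the environment to produce a positive density of good blocks; this yields $\tf^q(\gb,h_c^a(\gb)+u) \gtrsim 1/N(u) \asymp u^{1/\ga}$. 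Simultaneously, this gives $h_c(\gb)\le h_c^a(\gb)$, hence $h_c(\gb)=h_c^a(\gb)$ by~\eqref{dominates}, together with the claimed lower bound on the critical exponent; the matching upper bound $\ga^{-1}$ is immediate from $\tf^q\le\tf^a$ and~\eqref{exponent}.

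The main obstacle, in my view, is the block-gluing step: the boundary renewal-constraint losses $\bP(N(u)\in\tau)\asymp N(u)^{\ga-1}/L(N(u))$ must be absorbed into subexponential prefactors at the block scale, and the passage from a single-block Paley--Zygmund estimate to an almost-sure lower bound on the quenched free energy requires combining independence of blocks across the i.i.d.\ environment with a law-of-large-numbers argument on the fraction of good blocks -- the cleanest formulation of which likely proceeds through a martingale built from the constrained block partition functions, matching the title of the paper.
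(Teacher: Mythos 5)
Your second-moment core is sound and is essentially the paper's Proposition \ref{martinG}: the replica expansion, the observation that $|\tau^{(1)}\cap\tau^{(2)}|$ is geometric for a transient intersection renewal, and Doney's estimate $\bP(n\in\tau)\asymp n^{\ga-1}/L(n)$ to get transience exactly under the stated hypotheses. But your propagation step has a genuine gap, and it sits precisely where you locate ``the main obstacle.'' At your matched scale $N(u)\asymp u^{-1/\ga}$ the annealed partition function satisfies $Z_{N(u)}^{0,\bar h}=O(1)$ (that is the definition of the matched scale), so Paley--Zygmund gives good blocks with $Z\ge \tfrac12 Z^{0,\bar h}=O(1)$: the per-block energetic gain is a \emph{constant}, while every block concatenation pays costs of order $\log N(u)$ --- the pinning constraint $\log\bP(N\in\tau)\asymp(\ga-1)\log N$, the no-contact bound $\log\bP(\tau_1>N)\asymp-\ga\log N$ on the bad blocks (which occur with probability $1-\gep_0$), and the $K(\cdot)$-cost of jumping over bad stretches. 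Since $\log N(u)\to\infty$ as $u\downarrow 0$, the net per-block contribution of your coarse-graining is negative at the matched scale, and no law-of-large-numbers over good blocks can rescue it: as written the construction yields no positive lower bound on $\tf^q$. (It could be rescued by taking a supercritical scale $N(u)\asymp u^{-1/\ga-\gd}$ so the annealed gain $N\tf(h_c+u)\asymp u^{-\gd}$ beats the logarithms, but then your uniform bound on $\bbE[W_{N(u)}^2]$ must be proved under the \emph{tilted} two-replica measure at $\bar h=h_c+u>h_c$, where the intersection count is no longer a clean geometric variable and Doney's theorem --- which applies to $\bP$, not to the tilted renewal --- no longer gives the bound directly; you assert this uniform bound without proof.)

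The paper's route avoids both difficulties. It works exactly at $h^a_c(\gb)$, where $Z_N^{\gb,-\gl(\gb),\go}$ is a genuine martingale; $L^2$-boundedness gives $Z_\infty>0$ a.s., and a size-biasing lemma (Lemma \ref{macin}: bound $\bbE[\bP_N(A_n)]$ by $\gd^{-1}\bP(A_n)$ plus $\bbP(Z_N<\gd)$, using tightness of $Z_N^{-1}$) transfers the almost-sure renewal behavior to the quenched measure, yielding $\bP_N^{\gb,-\gl(\gb),\go}(|\tau\cap[1,N]|>N^{\gga})\to1$ for every $\gga<\ga$ (Proposition \ref{technos}). Then Proposition \ref{expos} shifts $h$ by $u$ at the \emph{larger} scale $N_u=\lfloor u^{-1/(\gga-\gep)}\rfloor$, where the gain $uN_u^{\gga}\asymp N_u^{\gep}$ is polynomially large and trivially absorbs all the $O(\log N)$ boundary and no-contact costs; finally, superadditivity of $N\mapsto\bbE\log Z_N^c$ converts the single-scale estimate $\bbE\log Z_{N_u}^c\ge1$ directly into $\tf^q(\gb,h^a_c+u)\ge N_u^{-1}\ge u^{1/(\gga-\gep)}$ --- no block gluing, no ergodic argument, no good/bad-block bookkeeping at all. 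This is why the paper obtains the exponent $\ga^{-1}$ only as a limit over $\gga\uparrow\ga$, $\gep\downarrow0$ (which is all the theorem claims), whereas your matched-scale sketch implicitly targets the sharper statement $\tf^q\gtrsim u^{1/\ga}$ --- true (Alexander), but requiring substantially heavier machinery than either your sketch or this paper deploys.
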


\begin{rem}\rm
 Note that it suffices to prove $\limsup_{h\to h^a_c(\gb)^+} \frac{\log(\tf(\gb,h))}{\log(h-h^a_c(\gb))}\le \alpha^{-1}$, which implies $h_c(\gb)\ge h^a_c(\gb)$. The rest of the statement is implied by \eqref{dominates} and \eqref{exponent}.
\end{rem}

We give an explicit lower-bound for $\gb_2$ in Proposition \ref{martinG}.
Theorem \ref{mainrel} follows from Propositions \ref{expos}, \ref{technos} and \ref{martinG} that we prove in the next section. The first proposition links the expected number of contacts at the critical point and the critical exponent for the free energy.

\begin{proposition}\label{expos}
Consider $\gamma>0$.
If $h_0$ and $\gb$ are such that $\tf(\gb,h_0)=0$ and if there exists $c>0$ such that
\begin{equation}\label{condinf}
 \liminf_{N\to \infty}\bbP\left[  \bP^{\gb,h_0,\go}_N\left(|\tau\cap[0,N]|>N^{\gamma}\right) >c \right] >c,
\end{equation}
then $h_c(\gb)=h_0$ and for any $\theta>\gga^{-1}$
\begin{equation}
 \liminf_{h\to h_{0}^+} \tf(\gb,h)(h-h_0)^{-\theta}=\infty.
\end{equation}
In the same way, if there exists $c>0$ such that
\begin{equation}\label{chemoss}
 \limsup_{N\to \infty}\bbP\left[  \bP^{\gb,h_0,\go}_N\left(|\tau\cap[0,N]|>N^{\gamma}\right) >c \right] >c
\end{equation}
then $h_c(\gb)=h_0$ and for any $\theta>\gga^{-1}$
\begin{equation}\label{chemos}
\limsup_{h\to h_{0}^+}\tf^q(\gb,h)(h-h_0)^{-\theta}=\infty.
\end{equation}
\end{proposition}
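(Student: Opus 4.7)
The plan is to turn the contact-fraction estimate at $h_0$ into a Jensen-type lower bound on the constrained partition function
\[
Z_N^{\gb,h,\go,c}:=\bE[e^{\sum_{n=1}^N(h+\gb\go_n)\ind_{n\in\tau}}\ind_{N\in\tau}],
\]
then transfer it to $\tf^q(\gb,h)$ via the standard superadditive bound $\tf^q(\gb,h)\ge N^{-1}\bbE\log Z_N^{\gb,h,\go,c}$, and finally optimize over $N$. Writing $Z_N^{\gb,h,\go,c}/Z_N^{\gb,h_0,\go,c}=\bE_N^{c,\gb,h_0,\go}[e^{(h-h_0)|\tau\cap[0,N]|}]$ and applying Jensen's inequality yields
\[
\log Z_N^{\gb,h,\go,c}\ge \log Z_N^{\gb,h_0,\go,c}+(h-h_0)\,\bE_N^{c,\gb,h_0,\go}[|\tau\cap[0,N]|].
\]
To connect this with the hypothesis, which is stated for the unconstrained measure $\bP_N^{\gb,h_0,\go}$, I would invoke FKG (the positive-correlation property of the pinning measure): since $|\tau\cap[0,N]|$ and $\ind_{N\in\tau}$ are both increasing functionals of $\tau$, one obtains $\bE_N^{c,\gb,h_0,\go}[|\tau\cap[0,N]|]\ge \bE_N^{\gb,h_0,\go}[|\tau\cap[0,N]|]$. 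Combined with the elementary inequality $\bE_N[X]\ge N^\gamma\bP_N(X>N^\gamma)$, the hypothesis becomes $\bbP(\cE_N)\ge c$ with $\cE_N:=\{\bE_N^{\gb,h_0,\go}[|\tau\cap[0,N]|]\ge cN^\gamma\}$, either for all large $N$ (condition \eqref{condinf}) or along a subsequence $N_k\to\infty$ (condition \eqref{chemoss}).

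On the baseline side, the one-step bound $Z_N^{\gb,h_0,\go,c}\ge \bP(\tau_1=N)\,e^{h_0+\gb\go_N}$ together with $\bP(\tau_1=N)=L(N)/N^{1+\alpha}$ yields the deterministic estimate $\bbE\log Z_N^{\gb,h_0,\go,c}\ge -C\log N$. Averaging the Jensen inequality and using $\bbE \bE_N^{c,\gb,h_0,\go}[|\tau\cap[0,N]|]\ge c^2 N^\gamma$ on $\cE_N$ then gives
\[
\bbE\log Z_N^{\gb,h,\go,c}\ge -C\log N+c^2(h-h_0)N^\gamma.
\]
Iterating $Z_{N+M}^{\gb,h,\go}\ge Z_N^{\gb,h,\go,c}\,Z_M^{\gb,h,\theta^N\go}$ and applying the SLLN to the i.i.d.\ shifts $(\theta^{iN}\go)_i$ yields the superadditive estimate $\tf^q(\gb,h)\ge N^{-1}\bbE\log Z_N^{\gb,h,\go,c}$, hence
\[
\tf^q(\gb,h)\ge c^2(h-h_0)N^{\gamma-1}-CN^{-1}\log N.
\]
Optimizing by setting $N\asymp [(h-h_0)^{-1}\log(1/(h-h_0))]^{1/\gamma}$ under \eqref{condinf} yields $\tf^q(\gb,h)\ge c'(h-h_0)^{1/\gamma}[\log(1/(h-h_0))]^{1-1/\gamma}$, from which $\liminf_{h\to h_0^+}\tf^q(\gb,h)(h-h_0)^{-\theta}=\infty$ for every $\theta>\gamma^{-1}$ follows. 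Under \eqref{chemoss}, the same optimization restricted to $N\in\{N_k\}$ produces, for each $k$, a value $h_k\to h_0^+$ (of order $h_0+\log N_k/N_k^\gamma$) along which $\tf^q(\gb,h_k)/(h_k-h_0)^\theta\to\infty$, giving the limsup statement. The monotonicity of $\tf^q(\gb,\cdot)$, combined with the inequality $h_0\le h_c(\gb)$ forced by $\tf^q(\gb,h_0)=0$, finally gives $h_c(\gb)=h_0$ in both cases.

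The main obstacle is the FKG step: one must verify that the pinning measure on $\{0,1\}^{[1,N]}$ is positively associated, i.e., that tilting a renewal law by the product weight $\prod_n e^{(h+\gb\go_n)\ind_{n\in\tau}}$ preserves FKG. This is standard in the pinning literature but not instantaneous, and the regularly-varying (hence non-log-concave) inter-arrival law $K(n)=L(n)/n^{1+\alpha}$ makes the pointwise lattice condition somewhat delicate. If one wished to bypass FKG by running the argument with $Z_N$ rather than $Z_N^{c}$ and bridging $\bbE\log Z_N$ to $\bbE\log Z_N^{c}$ via concentration of $\log Z_N^{c}$, the concentration error of order $\sqrt{N\log N}$ would spoil the optimization precisely in the regime $\gamma<1/2$ where the proposition is subsequently applied in the paper.
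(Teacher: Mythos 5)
Your skeleton (tilting from $h_0$ to $h=h_0+u$, a $-C\log N$ baseline bound at $h_0$, superadditivity of the constrained partition function, optimization of $N$ against $u$ at scale $N\asymp u^{-1/\gamma}$ up to logarithms, and a subsequence argument for the limsup case) is essentially the paper's, but the step you flag as the ``main obstacle'' is a genuine gap, not a delicate-but-standard verification. The pinning measure is \emph{not} positively associated in the generality of the proposition: for a measure on subsets of $[1,N]$ with weight $\prod_i K(\ell_i)$ over the gap lengths $\ell_i$ (times a product weight in $\go$, which is FKG-neutral), the Holley/FKG lattice condition reduces, by inserting two points into a single gap, to
\begin{equation*}
K(a+b)\,K(b+c)\;\le\;K(b)\,K(a+b+c)\qquad\text{for all } a,b,c\ge 1,
\end{equation*}
that is, to log-convexity of $K$. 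But \eqref{regvar} only constrains the asymptotics of $K(n)=L(n)n^{-(1+\alpha)}$: a slowly varying $L$ may violate log-convexity at arbitrarily many scales, so your inequality $\bE_N^{c,\gb,h_0,\go}[|\tau\cap[0,N]|]\ge \bE_N^{\gb,h_0,\go}[|\tau\cap[0,N]|]$ is unjustified under the stated hypotheses, and there is no off-the-shelf result supplying it.

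Moreover, the dichotomy you pose (FKG versus a $\sqrt{N\log N}$ concentration loss) is false, and the escape route you dismiss is exactly the paper's proof. The comparison \eqref{compare0}, $Z_N^c\le Z_N\le[1+CN^{1+\alpha_+}\exp(-\gb\go_N-h)]Z_N^c$, is a deterministic renewal estimate holding pointwise in $\go$ (cf.\ \cite[Lemma 4.4]{cf:Book}), so $\bbE\log Z_N^c\ge \bbE\log Z_N-(1+\alpha_+)\log N-C'$: the bridge between constrained and unconstrained costs only $O(\log N)$, which your optimization absorbs for \emph{every} $\gamma>0$, including $\gamma<1/2$. Once you run the tilt on the unconstrained function, both FKG and Jensen become unnecessary: on the event of \eqref{condinf} or \eqref{chemoss}, restrict the tilted expectation to $\{|\tau\cap[0,N]|\ge N^\gamma\}$ to get $Z_N^{\gb,h_0+u,\go}/Z_N^{\gb,h_0,\go}=\bE_N^{\gb,h_0,\go}[e^{u|\tau\cap[0,N]|}]\ge \max(c\,e^{uN^\gamma},1)$, take the positive part after $\log$ and $\bbE$, bound $Z_N^{\gb,h_0,\go}\ge\bP(\tau_1>N)\ge N^{-\alpha_+}$ from below by the no-contact strategy, and only then convert to $Z_N^c$ at level $h_0+u$ via \eqref{compare0}. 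With that repair, the rest of your write-up — the baseline estimate, the superadditive transfer $\tf^q(\gb,h)\ge N^{-1}\bbE\log Z_N^c$, the choice $N_u=\lfloor u^{-(\gamma-\gep)^{-1}}\rfloor$ (or your slightly sharper $u^{-1}\log(1/u)$ calibration), the subsequence version under \eqref{chemoss}, and the identification $h_c(\gb)=h_0$ via monotonicity of $\tf^q(\gb,\cdot)$ — goes through and coincides with the paper's argument.
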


This result (the $\limsup$ version) combined with a following result of Giacomin and Toninelli \cite[Theorem 2.1]{cf:GT_cmp} gives the following consideration on polymer measure at the critical point.

\begin{cor}
 If one of the following conditions is satisfied 
\begin{itemize}
 \item [(i)] The law of $\go_1$ has bounded support.
 \item [(ii)] The law of $\go_1$ has density $d(\cdot)$ with respect to Lebesgue measure and there exists $R$ such that 
\begin{equation}
 \int_{\bbR}d(x+y)\log \left(\frac{d(x+y)}{d(y)}\right) \dd  y \le Rx^2.
\end{equation}

\end{itemize}
Then for any $\gamma>1/2$, and $\gb>0$.
 \begin{equation}\label{chemo}
\bP^{\gb,h_c(\gb),\go}_N\left(|\tau\cap[0,N]|>N^{\gamma}\right)\to 0
\end{equation}
in $\bbP$ probability.
\end{cor}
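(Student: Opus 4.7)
The plan is a contradiction argument combining the $\limsup$ part of Proposition \ref{expos} with the Giacomin--Toninelli quenched smoothing inequality \cite[Theorem 2.1]{cf:GT_cmp}. The starting observation is that Proposition \ref{expos} may be applied at $h_0 = h_c(\gb)$: the function $h \mapsto \tf^q(\gb, h)$ is convex, non-negative and identically zero on $(-\infty, h_c(\gb))$, hence by continuity vanishes at $h_c(\gb)$ itself, giving the required $\tf^q(\gb, h_0) = 0$.

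Suppose, for a contradiction, that \eqref{chemo} fails in $\bbP$-probability. Then one can extract $c > 0$ and a subsequence $N_k \to \infty$ with
\[
\bbP\!\left[\bP^{\gb, h_c(\gb), \go}_{N_k}\!\left(|\tau \cap [0, N_k]| > N_k^{\gamma}\right) > c\right] > c
\]
for every $k$, which is exactly condition \eqref{chemoss} of Proposition \ref{expos}. The $\limsup$ conclusion \eqref{chemos} then yields, for every $\theta > \gamma^{-1}$,
\[
\limsup_{h \to h_c(\gb)^+} \tf^q(\gb, h)\,(h - h_c(\gb))^{-\theta} = +\infty.
\]

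The second ingredient is the smoothing theorem of Giacomin--Toninelli, which under either assumption (i) or (ii) furnishes a constant $C(\gb) > 0$ such that $\tf^q(\gb, h_c(\gb) + u) \le C(\gb)\, u^2$ for all sufficiently small $u > 0$. Because $\gamma > 1/2$ one has $\gamma^{-1} < 2$, so any $\theta \in (\gamma^{-1}, 2)$ gives
\[
\tf^q(\gb, h)\,(h - h_c(\gb))^{-\theta} \le C(\gb)\,(h - h_c(\gb))^{2 - \theta} \to 0 \quad \text{as } h \to h_c(\gb)^+,
\]
in flat contradiction to the preceding display. Hence \eqref{chemo} must hold in $\bbP$-probability.

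The argument carries essentially no technical difficulty beyond invoking the two external inputs, so the main obstacle is bookkeeping: one must recognize that the failure of $\bbP$-convergence to $0$ in \eqref{chemo} is precisely the $\limsup$ hypothesis \eqref{chemoss} (and not the stronger $\liminf$ version \eqref{condinf}), and then exploit the non-empty window $(\gamma^{-1}, 2)$ opened by the strict inequality $\gamma > 1/2$ to select $\theta$. All the substantive content is delivered by Proposition \ref{expos} (the new input) and by the Giacomin--Toninelli smoothing bound \cite{cf:GT_cmp} (the existing one).
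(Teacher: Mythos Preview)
Your proof is correct and follows exactly the same route as the paper's own argument: assume \eqref{chemo} fails, invoke the $\limsup$ part of Proposition~\ref{expos} at $h_0=h_c(\gb)$ to obtain \eqref{chemos} for some $\theta<2$, and derive a contradiction with the Giacomin--Toninelli smoothing bound \cite[Theorem~2.1]{cf:GT_cmp}. Your write-up simply makes the bookkeeping (why $\tf^q(\gb,h_c(\gb))=0$, why failure of convergence in probability yields \eqref{chemoss}, and how to choose $\theta\in(\gamma^{-1},2)$) more explicit than the paper does.
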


\begin{proof}
 If \eqref{chemo} does not hold, then \eqref{chemoss} and therefore \eqref{chemos}, hold for some $c>0$ and $\theta<2$. But this contradicts the conclusion of  \cite[Theorem 2.1]{cf:GT_cmp}, that says that

\begin{equation}
\limsup_{h\to h_{c}(\gb)^+} \tf^q(\gb,h)(h-h_c(\gb))^{-2}< \infty.
\end{equation}

\end{proof}

Before presenting the next result, we need some definitions.
For the techniques we are to use, we need the assumption that $\tau$ is recurrent. However, if $\tau$ is not recurrent, one can consider the system based on the renewal $\tilde \tau$ defined by $\bP(\tilde \tau_1=n)=K(n)/(\sum_{n=1}^{\infty} K(n))$ which is recurrent, and whose free energy curve is just a shift along the $h$ coordinate of the free energy curve associated to $\tau$, to prove Theorem \ref{mainrel} (see \cite[Remark 1.19]{cf:Book}).

In this framework, one can check easily that the sequence of partition functions of the systems of size $N$ at the annealed critical point $h^a(\gb)=-\gl(\gb)$
\begin{equation}
 Z_N^{\gb,-\gl(\gb),\go}=\bE\left[\exp\left(\sum_{n=1}^N (\gb \go_n-\gl(\gb))\ind_{\{n\in \tau\}}\right)\right], \quad  N\in \N,
\end{equation}
is a martingale with respect to the filtration $(\mathcal F_N)_{N\in \N}$, where
\begin{equation}
\mathcal F_N:= \sigma( \go_n, n\le N), 
\end{equation}
is the sigma-algebra generated by the environment seen up to the step $N$.
Since it is non-negative, it converges almost-surely to a limit
\begin{equation}
 Z^{\gb,\go}_{\infty}:=\lim_{N\to \infty} Z_{N}^{\gb,-\gl(\gb),\go}.
\end{equation}
The reader can check that the event $\{Z_{\infty}=0\}$ belongs to the tail sigma algebra $
 \bigcap_{N\in \N}\sigma(\go_n,n\ge N)$,
and hence, has probability $0$ or $1$.

The following result indicates that when the martingale in non-degenerate, disorder does not affect the behavior of $\tau$ at the annealed critical point. It is proved at the end of the paper in  Section \ref{wdivl} where we study the infinite volume limit of the polymer measure.
\begin{proposition}\label{technos}
 Let $\gb>0$ and $\tau$ a recurrent renewal be such that
\begin{equation}
  Z^{\gb,\go}_{\infty}>0 \quad  \bbP \ a.s.
\end{equation}
Then, one has for all $\gamma<\alpha$.
\begin{equation}\label{tructruc}
 \lim_{N\to \infty} \bbE\left[\bP_N^{\gb,-\gl(\gb)}\left( |\tau\cap[1,N]|>N^{\gamma}\right)\right]=1.
\end{equation}
\end{proposition}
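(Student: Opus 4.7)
Set $A_N:=\{|\tau\cap[1,N]|\le N^\gamma\}$; we must show $\bbE[\bP_N^{\gb,-\gl(\gb),\go}(A_N)]\to 0$. Unfolding the polymer measure, this quantity equals
\begin{equation*}
\bbE\left[\frac{1}{Z_N^{\gb,-\gl(\gb),\go}}\,\bE\!\left[e^{\sum_{n=1}^N(\gb\go_n-\gl(\gb))\ind_{n\in\tau}}\ind_{A_N}\right]\right].
\end{equation*}
The heuristic is that if $Z_N$ were pointwise bounded from below by a constant $\gep>0$, then Fubini combined with $\bbE[e^{\gb\go_n-\gl(\gb)}]=1$ would collapse the annealed partition function and leave the bound $\bP(A_N)/\gep$. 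The hypothesis $Z_\infty^{\gb,\go}>0$ almost surely is used precisely to make such a lower bound on $Z_N$ valid outside a small exceptional event.

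For $\gep>0$ and $M\in\N$, set $\gO_{M,\gep}:=\{\inf_{N\ge M}Z_N^{\gb,-\gl(\gb),\go}\ge\gep\}$. By almost sure martingale convergence, $\bigcup_M\gO_{M,\gep}\supseteq\{Z_\infty^{\gb,\go}>\gep\}$, so $\bbP(\gO_{M,\gep})\to\bbP(Z_\infty^{\gb,\go}>\gep)$ as $M\to\infty$. Splitting according to $\gO_{M,\gep}$ and using the trivial bound $\bP_N^{\gb,-\gl(\gb),\go}(A_N)\le 1$ on its complement, for every $N\ge M$,
\begin{equation*}
\bbE[\bP_N^{\gb,-\gl(\gb),\go}(A_N)]\le\frac{1}{\gep}\,\bbE\,\bE\!\left[e^{\sum_{n=1}^N(\gb\go_n-\gl(\gb))\ind_{n\in\tau}}\ind_{A_N}\right]+\bbP(\gO_{M,\gep}^c).
\end{equation*}
By Fubini applied to the nonnegative integrand, together with $\bbE[e^{\gb\go_n-\gl(\gb)}]=1$ for each $n\in\tau$, the double expectation collapses to $\bP(A_N)$.

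The last input is the pure renewal estimate $\bP(A_N)\to 0$ for $\gamma<\alpha$: writing $A_N=\{\tau_{\lfloor N^\gamma\rfloor+1}>N\}$ and using that the partial sums $\tau_k$ of a recurrent renewal with interarrival tail regularly varying of index $\alpha\in(0,1)$ satisfy $\tau_k/b_k\Rightarrow\sigma$, with $b_k$ regularly varying of index $1/\alpha$ and $\sigma$ a positive $\alpha$-stable law, one sees that $\tau_{\lfloor N^\gamma\rfloor+1}$ is of order $N^{\gamma/\alpha}\ll N$ as soon as $\gamma<\alpha$. Combining everything, we send $N\to\infty$ at fixed $(M,\gep)$ to kill the first term, obtaining $\limsup_N\bbE[\bP_N^{\gb,-\gl(\gb),\go}(A_N)]\le\bbP(\gO_{M,\gep}^c)$; then $M\to\infty$ bounds the right-hand side by $\bbP(Z_\infty^{\gb,\go}\le\gep)$, and finally $\gep\to 0$ yields zero thanks to the assumption $\bbP(Z_\infty^{\gb,\go}=0)=0$. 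The only delicate point is that the lower bound on $Z_N$ must hold uniformly in $N\ge M$; a pointwise bound at a single $N$ would not suffice, but almost sure convergence $Z_N\to Z_\infty>0$ provides exactly such a uniform bound on a set of probability approaching one.
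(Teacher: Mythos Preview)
Your proof is correct and follows essentially the same route as the paper: split on the event that $Z_N^{\gb,-\gl(\gb),\go}$ is bounded below, use Fubini with $\bbE[e^{\gb\go_n-\gl(\gb)}]=1$ to reduce the main term to $\bP(A_N)$, and then invoke the stable limit for $\tau_k$ to show $\bP(A_N)\to 0$ when $\gamma<\alpha$. The only cosmetic difference is that the paper splits on $\{Z_N\ge\gd\}$ at each fixed $N$ and appeals to tightness of $(1/Z_N)_N$, whereas you introduce the uniform event $\gO_{M,\gep}=\{\inf_{N\ge M}Z_N\ge\gep\}$ with an extra parameter $M$; both devices encode the same consequence of $Z_N\to Z_\infty>0$ a.s.
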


The proof of Theorem \ref{mainrel} can be achieved once we have the following criterion for the convergence of the martingale,

\begin{proposition}\label{martinG}
Let $\tau$ be a recurrent renewal. Let $\tau^{(1)}$ and $\tau^{(2)}$ denote two independent copies of $\tau$.
If the renewal process $\tau':=\tau^{(1)}\cap\tau^{(2)}$ is transient, then $Z^{\gb,\go}_{\infty}>0$ $\bbP-a.s.$ for all $\gb<\gb_2$ where
\begin{equation}
 \gb_2:=\inf\left\{\gb \ | \ \gl(2\gb)-2\gl(\gb)>-\log \bP^{\otimes 2} (\tau'_1<\infty)\right\}.
\end{equation}
\end{proposition}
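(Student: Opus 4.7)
The plan is the standard second-moment (Doob $L^2$) argument for disordered polymer martingales. I would show $\sup_N \bbE[Z_N^2] < \infty$, where $Z_N := Z_N^{\gb,-\gl(\gb),\go}$. Combined with the fact that $(Z_N)$ is a non-negative martingale with $\bbE[Z_N] = 1$, this yields $L^2$-convergence, hence $\bbE[Z^{\gb,\go}_\infty] = 1$ and in particular $\bbP(Z^{\gb,\go}_\infty > 0) > 0$. Since $\{Z^{\gb,\go}_\infty = 0\}$ is a tail event as observed in the excerpt, the $0$--$1$ law then promotes this to $\bbP(Z^{\gb,\go}_\infty > 0) = 1$.

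The central computation expands $Z_N^2$ using two independent copies $\tau^{(1)}, \tau^{(2)}$ of $\tau$ and exchanges the disorder and renewal expectations. Because the $\go_n$'s are independent, for each site $n$ the disorder average of $\exp(\gb\go_n(\ind_{\{n\in\tau^{(1)}\}}+\ind_{\{n\in\tau^{(2)}\}}))\cdot \exp(-\gl(\gb)(\ind_{\{n\in\tau^{(1)}\}}+\ind_{\{n\in\tau^{(2)}\}}))$ equals $1$ unless $n \in \tau^{(1)} \cap \tau^{(2)}$, in which case it equals $e^{\gl(2\gb)-2\gl(\gb)}$. Consequently
\begin{equation*}
\bbE[Z_N^2] = \bE^{\otimes 2}\!\left[\exp\bigl((\gl(2\gb)-2\gl(\gb))\, |\tau' \cap [1,N]|\bigr)\right],
\end{equation*}
where $\tau' = \tau^{(1)}\cap\tau^{(2)}$.

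The last step uses that $\tau'$ is itself a renewal process under $\bP^{\otimes 2}$ (standard, by applying the strong renewal property simultaneously at common return times). By the transience hypothesis, $q := \bP^{\otimes 2}(\tau'_1<\infty) < 1$, so the total number of points $|\tau' \cap [1,\infty)|$ is geometric with $\bP^{\otimes 2}(|\tau'|\geq k) = q^k$. Its moment generating function at $a := \gl(2\gb)-2\gl(\gb)$ is finite precisely when $q\,e^a < 1$, equivalently $\gb < \gb_2$, yielding the uniform bound
\begin{equation*}
\sup_N \bbE[Z_N^2] \leq \bE^{\otimes 2}\!\left[\exp(a|\tau'|)\right] = \frac{1-q}{1 - q\,e^{a}} < \infty.
\end{equation*}
There is no genuine obstacle: the definition of $\gb_2$ is engineered to make exactly this second-moment integral converge, and the entire argument reduces to a one-line geometric series once the per-site cancellation is noticed.
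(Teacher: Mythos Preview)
Your proof is correct and follows essentially the same route as the paper: both establish $\sup_N \bbE[(Z_N^{\gb,-\gl(\gb),\go})^2]<\infty$ via the replica/Fubini computation, identify the exponent $\gl(2\gb)-2\gl(\gb)$ on the intersection renewal $\tau'$, and use that $|\tau'\cap[1,\infty)|$ is geometric with parameter $\bP^{\otimes 2}(\tau'_1<\infty)$ to conclude finiteness exactly when $\gb<\gb_2$, then finish with $\bbE[Z^{\gb,\go}_\infty]=1$ and the $0$--$1$ law.
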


\begin{rem}\rm Using the techniques developed in \cite{cf:Hubert} for hierarchical pinning model with site disorder we could also prove that when the renewal process $\tau'$ is recurrent, then the martingale limit $Z^{\gb,\go}_{\infty}=0$, $\bbP$-$a.s.$ for all $\gb>0$.
\end{rem}

\begin{proof}
It is sufficient to prove that when $\gb<\gb_2$, the martingale $Z_N^{\gb,-\gl(\gb),\go}$ is uniformly integrable
(then $\bbE\left[Z^{\gb,\go}_\infty\right]=1$ so that the limit cannot be uniformly equal to zero). Here we prove the stronger statement that $Z_N^{\gb,-\gl(\gb),\go}$ is bounded in $\bbL^2$.
To compute the second moment, one just has to make use of Fubini's theorem
\begin{multline}
 \bbE\left[\left(Z_N^{\gb,-\gl(\gb),\go}\right)^2\right]=\bE^{\otimes 2} \left[ \bbE\left[ \exp\left(\sum_{n=1}^N \left[\gb\go_n-\gl(\gb)\right](\ind_{\{n\in \tau ^{(1)}\}}+\ind_{\{n\in \tau ^{(2)}\}})\right)\right]\right]\\
=\bE^{\otimes 2}\left[\exp\left(\sum_{n=1}^N \left[\gl(2\gb)-2\gl(\gb)\right]\ind_{\{n\in \tau ^{(1)}\cap \tau ^{(2)} \}}\right)\right],
\end{multline}
where $\bP^{\otimes 2}$ is the probability law of $\tau ^{(1)}$, $\tau ^{(2)}$ which are two independent copies of $\tau$.
The sequence is bounded from above if and only if 
\begin{equation}\label{moinque}
 \bE^{\otimes 2}\left[\exp\left(\sum_{n=1}^\infty \left[\gl(2\gb)-2\gl(\gb)\right]\ind_{\{n\in \tau ^{(1)}\cap \tau ^{(2)} \}}\right)\right]<\infty.
\end{equation}
The quantity $\sum_{n=1}^\infty \ind_{\{n\in \tau ^{(1)}\cap \tau ^{(2)} \}}$ is the total number of return to zero of the renewal $\tau'=\tau^{(1)}\cap \tau^{(2)}$. It is therefore a geometric random variable. Therefore, \eqref{moinque} holds if
\begin{equation}
\gl(2\gb)-2\gl(\gb)< -\log \bP(\tau'_1<\infty ).
\end{equation}
\end{proof}

\begin{rem}\rm The idea of using martingale techniques to prove convergence of the partition function has been inspired by techniques developed by Bolthausen \cite{cf:B} for directed polymer, which have been refined since by numerous authors, including Comets and Yoshida \cite{cf:CY} to describe property of the weak disorder phase.
\end{rem}

\begin{proof}[Proof of Theorem \ref{mainrel} when $\tau$ is recurrent]
First, we show that under the given conditions, $\tau'$ defined above is transient. We compute the expectation of the number of renewal points
\begin{equation}
 \bE[|\tau'\setminus\{0\}|]=\sum_{n=1}^\infty\bP^{\otimes2}[n\in\tau']=\sum_{n=1}^\infty\bP[n\in\tau]^2
\end{equation}
 In \cite{cf:Don}, it is proved that for $\alpha\in(0,1)$,

\begin{equation}
 \bP[n\in\tau]=\frac{\alpha\sin(\pi\alpha)}{\pi L(n)n^{1-\alpha}}(1+o(1)).
\end{equation}
Therefore, if either $\alpha<1/2$ or $\alpha=1/2$ and \eqref{convers} holds,  $\bE[|\tau'\setminus\{0\}|]<\infty$ and the process is transient.
We use Proposition  \ref{technos}  and \ref{martinG}and get that, for all $\gb<\gb_2$, \eqref{tructruc} holds for any $\gamma<\alpha$. Then we use the first part of Proposition \ref{expos} to get 
\begin{equation}
\liminf_{h\to -\gl(\gb)^+} \tf(\gb,h)(h+\gl(\gb))^{-\theta}=\infty.
\end{equation}
 for any $\theta>\alpha^{-1}$.
\end{proof}

\section{Proofs}

\subsection{Proof of Proposition \ref{expos}}

For this proof, one has to introduce the partition function of the system with the end point constrained to be pinned (for notational convenience dependence in $\gb,h,\go$ is omitted when no confusion is possible):
\begin{equation}
 Z_N^c:=\bE\left[\exp\left(\sum_{n=1}^N (\gb\go_n+h)\ind_{\{n\in \tau\}}\right)\ind_{\{N\in \tau\}}\right].
\end{equation}
This partition function can be compared to $Z_N$ with the following inequalities, for any $\alpha_+>\alpha$ 
\begin{equation}\label{compare0}
 Z_N^c\le Z_N \le \left[1+ C N^{1+\alpha_+} \exp(-\gb\go_n-h)\right]Z_N^c.
\end{equation}
where  $C$ is a constant depending only on the law of the renewal and $\alpha_+$ (see the proof \cite[Lemma 4.4]{cf:Book}).
Therefore
\begin{equation}
 \lim_{N\to\infty} \frac{1}{N}\bbE \log  Z_N^c=\tf(\gb,h).
\end{equation}
As $\bbE \log  Z_N^c$ is a super-additive sequence, one has $\tf(\gb,h)\ge N^{-1} \bbE \log  Z_N^c$ for every $N$.

We write $u=h-h_0$. Our aim is to prove that for any $\gep>0$, for $u$ sufficiently small $\tf(\gb,h_0+u)\ge u^{(\gamma-\gep)^{-1}}$.
We fix some $\gep>0$, and define $N=N_u:= \lfloor u^{-(\gamma-\gep)^{-1}} \rfloor$.
Suppose that $u$ is small and such that
\begin{equation}\label{fghjk}
 \bbP\left\{ \bP_N^{\gb,h_0,\go}\left(|\tau\cap[1,N]|> N^\gamma\right)> c \right\}\ge c/2.
\end{equation}

From the definition of the pinning measure we have
\begin{multline} 
 \frac{Z_N^{\gb, h_0+u, \go}}{Z_N^{\gb,h_0,\go}}=\bE_N^{\gb,h_0,\go}\left[\exp\left(u\sum_{n=1}^N \gd_n\right)\right]\\
\ge \max\left(  \exp(N^{\gamma} u)\bP_N^{\gb,h_0,\go}\left\{|\tau\cap[1,N]|\ge N^\gamma\right\}\ ,\ 1\right).
\end{multline}
Taking $\log$ and expectation one both sides one gets 
\begin{multline} \label{compare}
 \bbE \log Z_N^{\gb, h_0+u, \go} \ge \bbE \log Z_N^{\gb,h_0,\go}
+\bbE\left[ \left(N^{\gamma} u+\log \bP_N^{\gb,h_0,\go}\left\{|\tau\cap[1,N]|\ge N^\gamma\right\}\right)_+\right]\\
\ge \log \bP(\tau_1>N)+\frac{c}{2}\left(uN^{\gamma}+\log c\right).
\end{multline}
where $(x)_+=\max(x,0)$ denotes the positive part, and $(x)_-=-\min(x,0)$ denotes the negative part. To get the second line we only used \eqref{fghjk} and bounded $Z^{\gb,h_0,\go}_N$ from below by the probability of having no contacts in $[0,N]$.
We want to bound the constrained partition function so we use \eqref{compare0}
\begin{multline}
 \bbE \log (Z_N^c)^{\gb, h_0+u, \go}\ge \bbE \log Z_N^{\gb, h_0+u, \go}-\bbE \left[\log \left(1+CN^{1+\alpha_+}\exp(-\gb\go_n-h)\right)\right]\\
\ge \bbE \log Z_N^{\gb, h_0+u, \go}-(1+\alpha_+)\log N - C',
\end{multline}
 where $C'$ is a constant that can be chosen uniform in $h\ge h_0$, and that depends on $\gb$ and $C$.
Altogether, using \eqref{compare} and $\bP(\tau_1>N)\ge N^{-\alpha_+}$ when $N$ is large enough (cf. \eqref{regvar}), this gives us
\begin{equation}
\bbE \left[\log(Z_N^c)^{\gb, h_0+u, \go}\right] \ge -(1+2\alpha_+)\log N -C''+\frac{c}{2}uN^{\gamma}.
\end{equation}
Now, using the fact that $u\ge N^{-\gamma+ \gep}/2$, one sees that for $N$ large enough ({\it i.e.} $u$ small enough), 
$\bbE \left[\log (Z_N^c)^{\gb, h_0+u, \go} \right] \ge 1$,
so that
\begin{equation}\label{uiop}
 \tf(\gb,h_0+u)\ge \frac{1}{N}\bbE \left[\log (Z_N^c)^{\gb, h_0+u, \go} \right]\ge N_u^{-1}\ge u^{(\gamma-\gep)^{-1}}.
\end{equation}
To finish the proof, we notice that under condition \eqref{condinf}, \eqref{fghjk} (and therefore \eqref{uiop}) holds for all small $u$, and that under condition \eqref{chemoss}, it holds for a sequence of values of $u$ that tends to zero.
\qed

\subsection{Weak disorder, the infinite volume limit}\label{wdivl}

It is shown in \cite[Chapter 7]{cf:Book} that the limiting polymer measure $\bP_{\infty}^{\gb,h,\go}=\lim_{N\to\infty} \bP_{N}^{\gb,h,\go}$ exists in a weak sense. In this section, we propose to describe accurately this measure at the annealed critical point, when the limit of the martingale $Z^{\gb,\go}_\infty$ is non-degenerate.

Let $\theta$ be the shift operator acting on the environment defined by
\begin{equation}
\theta \go:= (\go_{n+1})_{n\in \N}.
\end{equation}

Let $\mathcal G_N$ be the sigma-algebra generated by $\tau\cap[0,N]$.
For any fixed set $\bar \tau= \left\{ \bar \tau_1, \bar \tau_2,\dots, \bar \tau_n\right\}\subset [0,N]$, \ \ $\bar \tau_1<\bar \tau_2< \dots<\bar \tau_n$ define
\begin{equation}
 \bar \bP^{\gb,\go}_{\infty}\left(\tau\cap[0,N]= \bar \tau\right) = \frac{1}{Z^{\gb,\go}_\infty}\prod_{i=1}^n K(\bar \tau_i-\bar \tau_{i-1})\exp\left( \gb\go_{\bar \tau_i}-\gl(\gb)\right)\sum_{j=N+1}^{\infty} K(j-\tau_n)Z^{\gb,\theta^j\go}_{\infty}.
\end{equation}
One can check that this definition is coherent so that $\bar \bP^{\gb,\go}_{\infty}$ defines a probability measure on $\bigvee_{N\in\N} \mathcal G_N$. Moreover we have (and it is straightforward from the definition)

\begin{proposition}
When $\gb$ is such that $Z^{\gb,\go}_{\infty}>0$ $\bbP-a.s.$,
The sequence of measures $\bP_N^{\gb,-\gl(\gb),\go}$ converges weakly to $\bar \bP_{\infty}^{\gb,\go}$, $\bbP$-almost surely.
\end{proposition}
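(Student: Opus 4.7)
The plan is to reduce weak convergence on $\bigvee_{N\in\N}\mathcal{G}_N$ to pointwise convergence on cylinder events. Since the family $(\mathcal{G}_M)_{M\ge 0}$ is increasing and generates the target $\sigma$-algebra, it suffices, for each fixed $M\in\N$ and each $\bar\tau=\{\bar\tau_1<\cdots<\bar\tau_n\}\subset[1,M]$, to show that
\begin{equation*}
\bP^{\gb,-\gl(\gb),\go}_N(\tau\cap[0,M]=\bar\tau)\longrightarrow\bar\bP^{\gb,\go}_\infty(\tau\cap[0,M]=\bar\tau)\qquad\bbP\text{-a.s., as }N\to\infty.
\end{equation*}

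Applying the strong renewal property at the last point $\bar\tau_n$ of the cylinder, one writes
\begin{equation*}
Z^{\gb,-\gl(\gb),\go}_N\,\bP^{\gb,-\gl(\gb),\go}_N(\tau\cap[0,M]=\bar\tau)=\Bigl(\prod_{i=1}^n K(\bar\tau_i-\bar\tau_{i-1})e^{\gb\go_{\bar\tau_i}-\gl(\gb)}\Bigr)\,R_N(\bar\tau_n,M,\go),
\end{equation*}
where $R_N(\bar\tau_n,M,\go)$ is the continuation partition function from $\bar\tau_n$ up to time $N$, restricted to the event that no renewal occurs in $(\bar\tau_n,M]$. Decomposing $R_N$ according to the position $j$ of the first renewal after $\bar\tau_n$ yields
\begin{equation*}
R_N(\bar\tau_n,M,\go)=\sum_{j=M+1}^N K(j-\bar\tau_n)e^{\gb\go_j-\gl(\gb)}Z^{\gb,-\gl(\gb),\theta^j\go}_{N-j}+\bP(\tau_1>N-\bar\tau_n).
\end{equation*}

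As $N\to\infty$, the denominator $Z^{\gb,-\gl(\gb),\go}_N$ converges a.s.\ to $Z^{\gb,\go}_\infty>0$ by hypothesis, and the tail term $\bP(\tau_1>N-\bar\tau_n)$ vanishes since $\tau$ is recurrent. The only delicate point is the interchange of limit and sum in $R_N$; I would avoid a direct dominated-convergence argument by using the exact identity
\begin{equation*}
Z^{\gb,-\gl(\gb),\theta^{\bar\tau_n}\go}_{N-\bar\tau_n}=R_N(\bar\tau_n,M,\go)+\sum_{j=\bar\tau_n+1}^{M} K(j-\bar\tau_n)e^{\gb\go_j-\gl(\gb)}Z^{\gb,-\gl(\gb),\theta^j\go}_{N-j},
\end{equation*}
which is the same renewal decomposition applied without the cylinder constraint. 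The right-hand sum has a \emph{fixed, finite} number of terms, and each converges $\bbP$-a.s.\ to $K(j-\bar\tau_n)e^{\gb\go_j-\gl(\gb)}Z^{\gb,\theta^j\go}_\infty$ by martingale convergence applied to the shifted environment, while the left-hand side converges a.s.\ to $Z^{\gb,\theta^{\bar\tau_n}\go}_\infty$. Subtracting identifies the a.s.\ limit of $R_N$ with the expression appearing in the definition of $\bar\bP^{\gb,\go}_\infty$, and dividing by $Z^{\gb,-\gl(\gb),\go}_N\to Z^{\gb,\go}_\infty$ concludes.
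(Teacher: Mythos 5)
Your overall strategy is the natural one, and for context: the paper offers no written proof of this proposition at all (it is declared ``straightforward from the definition''), so the comparison is between your argument and what the author presumably had in mind --- convergence on cylinder events, renewal decomposition at the last point $\bar\tau_n$ of the cylinder, a.s.\ martingale convergence of the shifted partition functions $Z^{\gb,-\gl(\gb),\theta^j\go}_{N-j}$, and $Z^{\gb,-\gl(\gb),\go}_N\to Z^{\gb,\go}_\infty>0$ in the denominator. All of that is correct in your write-up: the cylinder events $\{\tau\cap[0,M]=\bar\tau\}$ are clopen in the product topology and form a convergence-determining class, your decomposition of $R_N$ is exact, and the simultaneous a.s.\ convergence over the countably many $j$ and cylinders costs only a countable union of null sets. (Incidentally, your formula carries the factor $e^{\gb\go_j-\gl(\gb)}$ at the first renewal $j>M$, which the paper's displayed definition of $\bar\bP^{\gb,\go}_\infty$ omits; your version is the consistent one --- the paper's display is evidently a typo, as is its $K(j-\tau_n)$ for $K(j-\bar\tau_n)$.)

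There is, however, one step where you claim more than your subtraction trick delivers. The trick rigorously identifies $\lim_N R_N=Z^{\gb,\theta^{\bar\tau_n}\go}_\infty-\sum_{j=\bar\tau_n+1}^{M}K(j-\bar\tau_n)e^{\gb\go_j-\gl(\gb)}Z^{\gb,\theta^j\go}_\infty$, but to conclude that this equals the tail series $\sum_{j=M+1}^\infty K(j-\bar\tau_n)e^{\gb\go_j-\gl(\gb)}Z^{\gb,\theta^j\go}_\infty$ appearing in the definition of $\bar\bP^{\gb,\go}_\infty$, you need precisely the infinite-volume renewal identity
\begin{equation*}
Z^{\gb,\theta^{k}\go}_\infty=\sum_{j=k+1}^\infty K(j-k)\,e^{\gb\go_j-\gl(\gb)}\,Z^{\gb,\theta^j\go}_\infty\qquad\bbP\text{-a.s.},
\end{equation*}
which is exactly the limit/sum interchange you set out to avoid (it is also the content of the ``coherence'' of the definition that the paper leaves to the reader, so you cannot simply quote it). The gap is genuine but closes in three lines: Fatou's lemma applied term by term to the finite-$N$ renewal identity gives the a.s.\ inequality $\geq$ above; moreover both sides have the same finite expectation, since $c:=\bbE\left[Z^{\gb,\go}_\infty\right]\le 1$ by Fatou, $Z^{\gb,\theta^j\go}_\infty$ depends only on $(\go_m)_{m>j}$ and is hence independent of $\go_j$ with $\bbE\left[e^{\gb\go_j-\gl(\gb)}\right]=1$, and $\sum_{m\ge1}K(m)=1$ by recurrence, so the right-hand side has expectation $\sum_{j>k}K(j-k)\,c=c$ as well; a nonnegative difference with zero mean vanishes a.s. With this inserted, your proof is complete.
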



What we want to show is that when $N$ is large, the measure  $\bE_N^{\gb,-\gl(\gb),\go}$ is, in a sense, very close to the annealed measure.
The complete method developed in \cite{cf:CY} could be applied here to prove that $\tau$ has a scaling limit under $\bE_N^{\gb,-\gl(\gb),\go}$ (the regenerative set of an $\alpha$ stable process, just like the annealed model see \cite[Chapter 2]{cf:Book}). We bound ourselves to show that $\bar\bP^{\gb,\go}_{\infty}$ inherits all the almost-sure features of $\bP$. More precisely

\begin{proposition}\label{toutou}
The measure  $\bbP  \bar \bP^{\gb,\go}_{\infty}$ is absolutely continuous with respect to $\bP$.
\end{proposition}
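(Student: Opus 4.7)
The strategy I would follow exploits the fact that at the annealed critical point $h = -\gl(\gb)$, annealing produces $\bbE[e^{\gb\go_n - \gl(\gb)}] = 1$ at every contact site, so the finite-volume annealed measure coincides with $\bP$. I want to lift this to the infinite volume via the identity
\begin{equation}\label{target}
\bbE\left[Z^{\gb,\go}_{\infty}\, \bar\bP^{\gb,\go}_{\infty}(A)\right] \;=\; c\,\bP(A), \qquad c := \bbE Z^{\gb,\go}_{\infty},
\end{equation}
claimed for every measurable $A$ on the renewal configurations. Once \eqref{target} is in hand, the proposition follows immediately: the hypothesis $Z^{\gb,\go}_{\infty}>0$ $\bbP$-a.s.\ together with Fatou's lemma gives $c>0$ (in fact $c=1$ under the $L^2$-boundedness of Proposition \ref{martinG}); hence for any $\bP$-null $A$ the identity forces $Z^{\gb,\go}_{\infty}\bar\bP^{\gb,\go}_{\infty}(A)=0$ $\bbP$-a.s., and the strict positivity of $Z^{\gb,\go}_{\infty}$ yields $\bar\bP^{\gb,\go}_{\infty}(A)=0$ $\bbP$-a.s., so $\bbE\bar\bP^{\gb,\go}_{\infty}(A)=0$.

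To prove \eqref{target} for $A \in \mathcal G_N$ I would simply multiply the defining formula of $\bar\bP^{\gb,\go}_{\infty}$ from the text by $Z^{\gb,\go}_{\infty}$ to eliminate the denominator, obtaining
\begin{equation*}
Z^{\gb,\go}_{\infty}\, \bar\bP^{\gb,\go}_{\infty}(A) \;=\; \sum_{\bar\tau \in A,\, \bar\tau \subset [0,N]} K(\bar\tau)\, e^{\gb\sum_{i}\go_{\bar\tau_i} - |\bar\tau|\gl(\gb)}\, \sum_{j>N} K(j-\bar\tau_*)\, Z^{\gb,\theta^j\go}_{\infty},
\end{equation*}
where $\bar\tau_* := \max\bar\tau$ and $K(\bar\tau) := \prod_i K(\bar\tau_i-\bar\tau_{i-1})$. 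The crucial observation is that $\bar\tau_* \le N < j$, so the exponential factor is a function of $(\go_k)_{k \le N}$ while $Z^{\gb,\theta^j\go}_{\infty}$ is a function of $(\go_k)_{k > j}$; these two blocks are independent under $\bbP$. Applying Fubini (everything is nonnegative), the expectation of each summand factorises, yielding $1$ from the annealing cancellation and $c$ from shift-invariance of $\bbP$. Summing the remaining deterministic factor $K(\bar\tau)\sum_{j>N}K(j-\bar\tau_*) = K(\bar\tau)\bP(\tau_1 > N-\bar\tau_*)$ over $\bar\tau\in A$ collapses to $c\,\bP(A)$, which is precisely \eqref{target}.

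Extending the identity from the algebra $\bigcup_N \mathcal G_N$ to the full Borel sigma-algebra $\bigvee_N \mathcal G_N$ is a routine monotone-class (or Carathéodory) argument, since both sides of \eqref{target} are sigma-additive in $A$. I do not anticipate any serious obstacle: the only real content of the argument is spotting the ``annealing + independent gap'' splitting of the environment at a jump $j>N\ge\bar\tau_*$, and everything else is bookkeeping.
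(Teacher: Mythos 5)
Your proof is correct, but it follows a genuinely different route from the paper's. The paper deduces Proposition \ref{toutou} from Lemma \ref{macin} (an adaptation of a lemma of Comets and Yoshida): for events $A_n\in\mathcal G_n$ with $\bP(A_n)\to 0$, one splits $\bbE\left[\bar\bP^{\gb,\go}_\infty(A_n)\right]$ according to whether the partition function exceeds a threshold $\gd$, bounds the contribution on $\{Z\ge\gd\}$ by $\gd^{-1}\bP(A_n)$ via the annealing identity $\bbE\left[Z_N^{\gb,-\gl(\gb),\go}\,\bP_N^{\gb,-\gl(\gb),\go}(A)\right]=\bP(A)$, and controls $\bbP(Z<\gd)$ by tightness of the inverse partition functions, which follows from $Z_N\to Z^{\gb,\go}_\infty>0$ a.s. That soft argument has the advantage of applying \emph{uniformly in $N$} to the finite-volume measures, which is exactly what the paper needs to also prove Proposition \ref{technos}. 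You instead prove the exact identity $\bbE\left[Z^{\gb,\go}_\infty\,\bar\bP^{\gb,\go}_\infty(A)\right]=c\,\bP(A)$ directly from the explicit infinite-volume formula, using that the annealed weight of the contact sites (all $\le N$) is independent of $Z^{\gb,\theta^j\go}_\infty$ (a function of $(\go_k)_{k>j}$, $j>N$) and that $K(\bar\tau)\sum_{j>N}K(j-\bar\tau_*)=\bP(\tau\cap[0,N]=\bar\tau)$ for the recurrent renewal; this is sharper, since it identifies the $Z_\infty$-weighted annealed measure as exactly $c\,\bP$ and so gives a quantitative form of absolute continuity, but it is tied to the infinite-volume measure (where the $Z_\infty$-weighting cancels the denominator) and would not by itself yield the uniform-in-$N$ statement of Lemma \ref{macin}. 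Two small corrections to your bookkeeping: Fatou applied to $\bbE Z_N^{\gb,-\gl(\gb),\go}=1$ gives $c\le 1$, i.e.\ \emph{finiteness} of $c$ — which is what the implication $\bP(A)=0\Rightarrow\bbE\left[Z_\infty\bar\bP_\infty(A)\right]=0$ actually requires — whereas $c>0$ (which your argument never in fact uses) follows directly from $Z^{\gb,\go}_\infty>0$ a.s., not from Fatou; and your monotone-class extension is legitimate because each $\mathcal G_N$ is generated by the finitely many atoms $\{\tau\cap[0,N]=\bar\tau\}$ on which you verified the identity, so the two finite measures agree on a generating algebra and hence on $\bigvee_N\mathcal G_N$.
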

Proposition \ref{toutou} follows from the generalization of Proposition \ref{technos} below (the second equality).

\begin{lemma}\label{macin}
 Let $A_n$ be a sequence of events with $A_n\in\mathcal G_{n}$ such that $\lim\limits_{n\to\infty}\bP(A_n)=0$.
Then
\begin{equation}
 \lim_{n\to\infty}\sup_{N}\bbE\left[ \bP_N^{\gb,-\gl(\gb),\go}(A_n)\right]=\lim_{n\to\infty}\bbE\left[\bar \bP_\infty^{\gb,\go}(A_n)\right]=0.
\end{equation}
\end{lemma}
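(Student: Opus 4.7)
The plan is to express each random probability as a ratio $Y/Z$, where $Y$ is a ``numerator'' whose $\bbP$-expectation is at most $\bP(A_n)$, and the denominator $Z$ is a partition function that is strictly positive $\bbP$-a.s.\ by hypothesis. For any $\eta>0$ the elementary split
\begin{equation*}
\bbE[Y/Z]\le \eta^{-1}\bbE[Y]+\bbP(Z\le \eta)
\end{equation*}
(where the second bound uses $Y/Z\le 1$ since $Y/Z$ is a probability) then suffices: sending $n\to\infty$ kills the first term because $\bP(A_n)\to 0$, and sending $\eta\to 0$ kills the second thanks to $Z>0$ a.s.

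Computing $\bbE[Y]$ is the cleanest ingredient. For the finite-volume measure we have $\bP^{\gb,-\gl(\gb),\go}_N(A_n)=Y_{N,n}(\go)/Z^{\gb,-\gl(\gb),\go}_N$ with
\begin{equation*}
Y_{N,n}(\go):=\bE\left[\ind_{A_n}\exp\Big(\sum_{i=1}^N(\gb\go_i-\gl(\gb))\ind_{\{i\in\tau\}}\Big)\right],
\end{equation*}
and Fubini combined with $\bbE[e^{\gb\go_1-\gl(\gb)}]=1$ gives $\bbE[Y_{N,n}]=\bP(A_n)$, with no dependence on $N$. For the infinite-volume measure, the corresponding numerator $\tilde Y_n(\go)$ can be read off directly from the definition of $\bar\bP^{\gb,\go}_\infty$: it is a sum, over $\bar\tau\subset[0,n]$ with $\bar\tau\in A_n$, of products of $K$-factors and Boltzmann weights $e^{\gb\go_{\bar\tau_i}-\gl(\gb)}$ (whose indices lie in $[1,n]$), multiplied by the tail factor $\sum_{j>n}K(j-\max\bar\tau)Z^{\gb,\theta^j\go}_\infty$ (depending on coordinates indexed $>n$). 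These two blocks are independent, each Boltzmann weight has mean $1$, and Fatou applied to the mean-$1$ martingale $(Z_N)$ together with stationarity yields $\bbE[Z^{\gb,\theta^j\go}_\infty]=\bbE[Z^{\gb,\go}_\infty]\le 1$; hence $\bbE[\tilde Y_n]\le \bP(A_n)$, which is amply sufficient.

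The main obstacle is the uniformity in $N$ in the finite-volume bound, i.e.\ establishing
\begin{equation*}
\lim_{\eta\to 0}\sup_{N\in \N}\bbP\bigl(Z^{\gb,-\gl(\gb),\go}_N\le\eta\bigr)=0.
\end{equation*}
My plan is to split at a threshold $N_0$. For $N\ge N_0$, the $\bbP$-a.s.\ convergence $Z_N\to Z^{\gb,\go}_\infty$ (hence in probability) together with the hypothesis $Z^{\gb,\go}_\infty>0$ yields $\bbP(Z_N\le\eta)\le \bbP(Z^{\gb,\go}_\infty\le 2\eta)+\bbP(|Z_N-Z^{\gb,\go}_\infty|>\eta)$, and both terms can be made small by choosing $\eta$ small and $N_0$ large. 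For $N<N_0$ (finitely many), the contribution of the trajectory $\tau\cap[1,N]=\emptyset$ furnishes the deterministic lower bound $Z^{\gb,-\gl(\gb),\go}_N\ge \bP(\tau_1>N)>0$, so $\bbP(Z_N\le\eta)=0$ as soon as $\eta$ is taken below the minimum of the finitely many thresholds $\bP(\tau_1>N)$. Combining these pieces closes both halves of the lemma.
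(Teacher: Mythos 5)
Your proof is correct and follows essentially the same route as the paper's: the split $\bbE[Y/Z]\le \gd^{-1}\bbE[Y]+\bbP(Z\le\gd)$ with the Fubini/martingale identity $\bbE[Z_N\bP_N^{\gb,-\gl(\gb),\go}(A_n)]=\bP(A_n)$ and control of $\sup_N\bbP(Z_N\le\gd)$ via the a.s.\ convergence of $Z_N$ to the positive limit $Z^{\gb,\go}_\infty$ (the Comets--Yoshida argument the paper cites). Your only additions are to unwind the paper's one-line tightness claim for $(Z_N^{-1})_N$ explicitly --- using the deterministic bound $Z_N\ge\bP(\tau_1>N)$ for small $N$ --- and to carry out the infinite-volume half (with $\bbE[Z^{\gb,\theta^j\go}_\infty]\le 1$ by Fatou), which the paper dismisses as ``similar and simpler''; both elaborations are sound.
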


\begin{proof}
The proof is very similar to the one of \cite[Lemma 4.2]{cf:CY}. We include it here for the sake of completeness.
We only prove $\lim_{n\to\infty}\sup_{N}\bbE\left[ \bP_N^{\gb,-\gl(\gb),\go}(A_n)\right]$, the other one being similar and simpler.
Let $\gd>0$ be arbitrary
\begin{equation}
 \bbE\left[ \bP_N^{\gb,-\gl(\gb),\go}(A_n)\right]=\bbE\left[\bP_N^{\gb,-\gl(\gb),\go}(A_n)\ind_{Z^{\gb,-\gl(\gb),\go}_N\ge \gd}\right]+ \bbP\left(Z^{\gb,-\gl(\gb),\go}_N< \gd\right).
\end{equation}
The first term on the right hand side can be bounded from above by 
\begin{equation}
 \gd^{-1}\bbE\left[Z^{\gb,-\gl(\gb),\go}_N \bP_N^{\gb,-\gl(\gb),\go}(A_n)\right]=\gd^{-1}\bP(A_n),
\end{equation}
which vanishes when $n$ goes large. As for the second-term, since $(Z^{\gb,-\gl(\gb),\go}_N)^{-1}$ converges almost surely, it is tight sequence, and hence
\begin{equation}
 \lim_{\gd\to 0} \sup_{N} \bbP\left(Z^{\gb,-\gl(\gb),\go}_N< \gd\right)=0.
\end{equation}
\end{proof}

\begin{proof}[Proof of Proposition \ref{technos}]
 We just have to use the preceding Lemma with $A_n:=\{|\tau\cap[0,n]|\le n^{\gamma}\}=\{\tau_{\lfloor n^{\gamma}\rfloor+1}>n\}$, $\gamma<\alpha$.
It is a standard computation to prove that $\lim_{n\to\infty}\bP[A_n]\to 0$: from \cite[XI.5 pp.373 and XIII.6 Theorem 2 (b) pp.448]{cf:Feller2} that $\tau_k/a_k$ converges to in law to an $\alpha$-stable distribution, where $a_k$ is such that $kL(a_k)a_k^{-\alpha}\to 1$. $\lim_{n\to\infty}\bP[A_n]\to 0$, follows from $a_k=o(k^{\frac{1}{\gamma}})$.
Therefore 
\begin{equation}
 \lim_{N\to \infty}\bbE \left[\bP_N^{\gb,-\gl(\gb),\go}(A_N)\right]=0.
\end{equation}

\end{proof}

{\bf Acknowledgements}: The author is very much indebted to Giambattista Giacomin for his precious advice while writing this article, and would like to thank  Fabio Toninelli and Quentin Berger for enlightening discussions. The author also acknowledges the support of ANR grant POLINTBIO and ERC grant PTRELSS.


\begin{thebibliography}{99}

\bibitem{cf:Ken} K.~S.~Alexander, \emph{The effect of disorder on
    polymer depinning transitions}, Commun. Math. Phys. {\bf 279} (2008)
     117-146.

 \bibitem{cf:AZ} K.~S.~Alexander and N.~Zygouras,
\emph{Quenched and annealed critical points in polymer pinning models},
Communications in Mathematics Physics {\bf 291} (2009) 659--689.

\bibitem{cf:AZ_new} K. Alexander and N. Zygouras, \emph{Equality of critical points for polymer depinning transitions with loop exponent one }, Ann. Appl. Probab. {\bf 20} (2010) 356--366.

\bibitem{cf:RegVar} N. H. Bingham, C. M. Goldie and J. L. Teugels,
  \emph{Regular variation}, Cambridge University Press, Cambridge,
  1987.


\bibitem{cf:B}  E. Bolthausen, {\it A note on diffusion of directed polymer in a random environment}, Commun. Math. Phys. {\bf 123} (1989) 529--534.

\bibitem{cf:CY} F. Comets and N. Yoshida {\it Directed polymers in a random environment
   are diffusive at weak disorder}, Ann. Probab. {\bf 34} 5 (2006) 1746--1770 .


\bibitem{cf:Don} R. A. Doney, \emph{One-sided local large deviation and 
renewal theorems in the case of infinite mean}, Probab. Theory Rel. Fields
{\bf 107} (1997) 451--465.

\bibitem{cf:DGLT}
B. Derrida,
G. Giacomin, H. Lacoin and F. L. Toninelli, \emph{
Fractional moment bounds and disorder relevance for pinning models},
Commun. Math. Phys. {\bf 287} (2009) 867--887.




\bibitem{cf:DHV} B. Derrida, V. Hakim and J. Vannimenus,
  \emph{Effect of disorder on two-dimensional wetting}, J. Statist.
  Phys. {\bf 66} (1992), 1189--1213.

\bibitem{cf:Feller2} 
W, Feller,
\emph{An introduction to probability theory and its applications,
Vol. II}. Second edition, John Wiley \& Sons, 1971.

\bibitem{cf:Fisher}
M.~E.~Fisher, \emph{ Walks, walls, wetting, and melting},
J. Statist. Phys. {\bf 34} (1984) 667--729.


\bibitem{cf:FLNO} G. Forgacs, J. M. Luck, Th. M. Nieuwenhuizen and
H. Orland, \emph{Wetting of a disordered substrate: exact critical
behavior in two dimensions}, Phys. Rev. Lett. {\bf 57} (1986),
2184--2187.

\bibitem{cf:Book} G.~Giacomin, \emph{Random polymer models}, IC press,
World Scientific, London (2007).

\bibitem {cf:GLT_marg} G. Giacomin, H. Lacoin and F. L. Toninelli,
\emph{Marginal relevance of disorder for pinning models},
Commun. Pure Appl. Math. {\bf 63} 233--265.

\bibitem{cf:GT_cmp} G.~Giacomin and  F.~L.~Toninelli, \emph{
Smoothing effect of quenched disorder on polymer depinning transitions},
Commun. Math. Phys. {\bf 266} (2006) 1--16.

\bibitem{cf:Harris} A.~B.~Harris, \emph{Effect of Random Defects on
      the Critical Behaviour of Ising Models}, J. Phys. C {\bf 7}
    (1974) 1671--1692.

\bibitem{cf:Hubert} H. Lacoin, \emph{Hierarchical pinning model with
     site disorder: Disorder is marginally relevant}, Probab. Theory Relat. Fields {\bf 148} (2010) 159--175.

\bibitem{cf:T_JSP} F.~L.~Toninelli \emph{Critical properties and finite--size estimates for the depinning transition of directed random polymers}, J. Statist. Phys. {\bf 126}  (2007) 1025--1044.

  \bibitem{cf:T_cmp}  F.~L.~Toninelli,
\emph{A replica-coupling approach to disordered pinning models},
Commun. Math. Phys. {\bf 280} (2008) 389--401.


\end{thebibliography}
\end{document}